\newenvironment{algorithm-hbox}{\hbadness=10000\begin{algorithm}}{\end{algorithm}}
\theoremstyle{plain}
\newtheorem*{claim}{Claim}
\newtheorem{conjecture}{Conjecture}
\newtheorem{theorem}{Theorem}
\newtheorem*{obs}{Observation}
\numberwithin{equation}{section}
\newcommand{\itemref}[1]{\ref{#1}}
\newenvironment{enumeratei}{\begin{enumerate}[label=\textup{(\roman*)}, noitemsep, topsep=1.5mm, labelindent=.8em, leftmargin=*, widest=.]}{\end{enumerate}}
\newenvironment{enumeratenum}{\begin{enumerate}[label=\textup{(\arabic*)}, noitemsep, topsep=1.5mm, labelindent=.8em, leftmargin=*, widest=.]}{\end{enumerate}}
\renewenvironment{itemize}{\begin{enumerate}[label=$*$, noitemsep, topsep=1.5mm, labelindent=.8em, leftmargin=*, widest=.]}{\end{enumerate}}
\newcommand{\set}[1]{\{#1\}}
\newcommand{\norm}[1]{{|#1|}}
\newcommand{\Cpl}{\mathbb{C}}
\renewcommand{\leq}{\leqslant}
\renewcommand{\geq}{\geqslant}
\DeclareMathOperator{\type}{type}
\begin{document}
\title[A new approach to nonrepetitive sequences]{A new approach to nonrepetitive sequences}
\author[J. Grytczuk]{Jaros\l aw Grytczuk}\thanks{Research of J.\ Grytczuk was supported by the Polish Ministry of Science and Higher Education grant N206257035. Research of J.\ Kozik and P.\ Micek was supported by the National Funds Center within project no.\  DEC-2011/01/D/ST1/04412.}
\address{Theoretical Computer Science Department, Jagiellonian University\\
Krak\'{o}w, Poland \and
Faculty of Mathematics and Information Science, Warsaw University of Technology\\
Warszawa, Poland}
\email{grytczuk@tcs.uj.edu.pl}
\urladdr{http://tcs.uj.edu.pl/Grytczuk}
\author[J. Kozik]{Jakub Kozik}
\author[P. Micek]{Piotr Micek}
\address{Theoretical Computer Science Department, Jagiellonian University\\
Krak\'{o}w, Poland}
\email{jkozik@tcs.uj.edu.pl}
\urladdr{http://tcs.uj.edu.pl/KozikJ}
\email{piotr.micek@tcs.uj.edu.pl}
\urladdr{http://tcs.uj.edu.pl/Micek}
\date{\today}
\keywords{Thue, nonrepetitive sequence}

\begin{abstract}
A sequence is \emph{nonrepetitive} if it does not contain two adjacent identical blocks. The remarkable construction of Thue asserts that $3$ symbols are enough to build an arbitrarily long nonrepetitive sequence. It is still not settled whether the following extension holds: for every sequence of 3-element sets $L_1,\ldots, L_n$ there exists a nonrepetitive sequence $s_1, \ldots, s_n$ with $s_i\in L_i$. We propose a new non-constructive way to build long nonrepetitive sequences and provide an elementary proof that sets of size $4$ suffice confirming the best known bound. The simple double counting in the heart of the argument is inspired by the recent algorithmic proof of the Lov\'{a}sz local lemma due to Moser and Tardos. Furthermore we apply this approach and present game-theoretic type results on nonrepetitive sequences. \emph{Nonrepetitive game} is played by two players who pick, one by one, consecutive terms of a sequence over a given set of symbols. The first player tries to avoid repetitions, while the second player, in contrast, wants to create them. Of course, by simple imitation, the second player can force lots of repetitions of size $1$. However, as proved by Pegden, there is a strategy for the first player to build an arbitrarily long sequence over $37$ symbols with no repetitions of size greater than $1$. Our techniques allow to reduce $37$ to $6$. Another game we consider is the \emph{erase-repetition game}. Here, whenever a repetition occurs, the repeated block is immediately erased and the next player to move continues the play. We prove that there is a strategy for the first player to build an arbitrarily long nonrepetitive sequence over $8$ symbols.
\end{abstract}

\maketitle

\section{Introduction}

A \emph{repetition} of size $h$ in a sequence $S$ is a
subsequence of consecutive terms of $S$ consisting of two identical blocks 
$x_{1}\ldots x_{h}x_{1}\ldots x_{h}$. A sequence is \emph{nonrepetitive}
if it does not contain a repetition of any size $h\geqslant 1$. For
instance, the sequence $1232312$ contains a repetition $2323$ of size two,
while $123132123$ is nonrepetitive.

It is easy to see that each binary sequence of length at least four contains
a repetition. In 1906 Thue \cite{Thu06} proved that $3$
symbols are sufficient to produce arbitrarily long nonrepetitive
sequences (see \cite{Ber95}). His method is constructive and uses
substitutions over a given set of symbols. For instance, the substitution%
\begin{align*}
1& \rightarrow 12312 \\
2& \rightarrow 131232 \\
3& \rightarrow 1323132
\end{align*}%
preserves the property of nonrepetitiveness on the set of finite sequences
over $\{1,2,3\}$. This means that replacing all symbols in a nonrepetitive
sequence by the assigned blocks results in a sequence that still does not
contain repetitions. Sequences generated by substitutions have found many
unexpected applications in such diverse areas as group theory, universal
algebra, number theory, ergodic theory, and formal language theory. The work
of Thue inspired a stream of research leading to emergence of new branches
of mathematics with a variety of challenging open problems (see \cite{AS03,BEM79,Cur05,Gry08,Lot83}).

In this paper we present a different approach to creating long nonrepetitive
sequences. Consider the following naive procedure: generate consecutive
terms of a sequence by choosing symbols at random (uniformly and
independently) and every time a repetition occurs, erase the repeated block
and continue. For instance, if the generated sequence is $12323$, we must
cancel the last two symbols, which brings us back to $123$.

We prove by a simple counting that with positive probability the
length of a constructed sequence exceeds any finite bound, provided the
number of symbols is at least $4$. This is slightly weaker than Thue's
result, but our argument remains valid in more general settings, in which
the method of substitutions does not seem to work.

One particular example of such a setting is the list-version of
nonrepetitive sequences -- an analog of the classical graph choosability
introduced by Vizing \cite{Viz76} and independently by Erd\H{o}s, Rubin,
and Taylor \cite{ERT80}. Suppose we are given a collection of \emph{lists}
(sets of symbols) $L_{1},\ldots ,L_{n}$. A sequence $s_{1}\ldots s_{n}$ is 
\emph{chosen from lists} $L_{1},\ldots ,L_{n}$ if $s_{i}\in L_{i}$ for all $%
i=1,\ldots ,n$. The following list-version of Thue's theorem seems plausible.
\begin{conjecture}
\label{conj}
For every $n\geqslant 1$ and a sequence of sets $L_{1},\ldots ,L_{n}$, each of size 3, there is a nonrepetitive sequence chosen from $L_{1},\ldots ,L_{n}$.
\end{conjecture}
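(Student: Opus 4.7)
The plan is to push the naive randomized erasure procedure from the introduction all the way down to lists of size $3$, using an entropy-compression argument in the spirit of Moser--Tardos. At step $i$ I would pick $s_i$ uniformly at random from $L_i$; whenever the current sequence ends in a repetition $x_1\ldots x_h x_1\ldots x_h$, the trailing copy is erased and the procedure continues. The goal is to prove that with positive probability the procedure produces arbitrarily long nonrepetitive outputs, which would imply the conjecture by a standard compactness argument on lists $L_1,L_2,\ldots$.

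The central step is an entropy bound. A run that makes $T$ random draws is encoded by the final surviving sequence together with an \emph{erasure log} recording, in order, the blocks that were erased and the positions at which the erasures occurred. A counting identity ties together the total numbers of symbols written, erased, and remaining on the tape, and the reconstruction map from $(\text{final sequence},\text{log})$ to the full sequence of random draws is injective. Comparing the number of valid encodings to $3^T$ is what must force $T$ to be bounded in expectation. For $|L_i|=4$ the analogous counting has room to spare; for $|L_i|=3$ the straightforward version fails by a hair.

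To recover the missing slack I would try three ideas, in increasing order of structural content. First, a sharper encoding of the erasure log: the character immediately preceding an erased block is \emph{forced} by the repetition that triggered the erasure, so each erased symbol should contribute strictly less than $\log_2 3$ bits of entropy when one accounts for this consistency constraint; pressing this observation to its limit is the most natural route. Second, replacing the uniform distribution on $L_i$ by a distribution that depends on the recent history $L_{i-1},L_{i-2},\ldots$, in the spirit of Shearer's refinement of the Lov\'{a}sz local lemma, trading weaker per-step entropy for fewer bad events. Third, a forbidden-factor analysis: any run with many erasures must contain highly rigid subsequences (of the type arising from Thue-style morphisms), and counting those rigid patterns should yield additional savings in the encoding.

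The main obstacle, and precisely the reason Conjecture~\ref{conj} has remained open, is that the entropy budget at $|L_i|=3$ is critically tight: the naive counting inequality sits essentially on the equality boundary, with no slack to absorb lower-order error terms. I would expect any successful argument to require a genuinely new combinatorial input -- structural information about which $3$-element list sequences can support long reduction histories -- rather than a purely quantitative tightening of the Moser--Tardos counting. My strongest hope lies in the first refinement above, but I would be prepared for that alone to fall short and to need the structural input of the third.
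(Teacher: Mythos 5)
The statement you are trying to prove is not a theorem of the paper at all: it is Conjecture~\ref{conj}, which the authors explicitly leave open (and in the final remarks they even caution that it might be false, by analogy with the list version of the Four Color Theorem). What the paper actually proves is the weaker Theorem~\ref{thm:4}, with lists of size $4$, via exactly the erase-and-restart algorithm and log/encoding scheme you describe. So your proposal is not a proof but a research plan, and its central quantitative step is missing. Concretely: in the paper's encoding a run of $M$ steps is recorded by a difference sequence (a lattice walk with steps $\leq 1$ staying positive) together with the surviving sequence, and the number of such walks is Catalan-like, of order $4^{M}$ up to polynomial factors. Against lists of size $4$ this is compared with $4^{M}$ evaluations and the polynomial loss wins; against lists of size $3$ you would need the count of logs to be $o(3^{M})$, so the naive inequality is off by an exponential factor $(4/3)^{M}$, not ``by a hair'' as you suggest. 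Your first refinement does not obviously recover anything: the fact that the erased block is forced by the repetition is precisely what makes the log decoding lossless, and that saving is already fully spent in the existing argument --- the erased symbols contribute no entropy to the log as it stands. The second and third ideas (history-dependent distributions, forbidden-factor rigidity) are stated as hopes without any estimate showing they close an exponential gap, and you yourself concede they may fall short.

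Two smaller points. The compactness step you invoke is unnecessary: the conjecture is a statement for each finite $n$ separately, so there is no infinite sequence of lists to compactify over. And if you want a realistic target for your machinery, the paper's own Theorem~\ref{thm:4} (size-$4$ lists) is the statement your outline actually supports; reproducing its double counting carefully would be a correct and complete exercise, whereas the size-$3$ case requires, as you rightly sense, a genuinely new structural idea that neither you nor the paper supplies.
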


Notice that the statement of the conjecture is not obvious, even for lists of any given size. 
However, a rather straightforward touch of the Lov\'{a}sz local lemma assures that the conjecture is true for sufficiently large lists (for a careful introduction to the local lemma and the probabilistic method in general we send the reader to \cite{AS08}). 
In fact, the bound $64$ comes as a special case of a result on nonrepetitive colorings of
bounded degree graphs (Alon et al. \cite{AGHR02}; see also \cite{Gry07}). Recently Grytczuk, Przyby\l o and Zhu \cite{GPZ} proved that lists of sizes at least $4$ suffice. They achieve this almost tight bound applying an enhanced version of the local lemma due to Pegden \cite{Peg11}. In Section \ref{sec:list-Thue} we give a simple argument for the same bound.

This research would not emerge without a contribution of Moser on his way to an algorithmic proof of Lov\'{a}sz local lemma \cite{MT10}: his entropy compression argument. This was widely discussed in the combinatorics community and we send the reader to great expositions of the topic by Tao \cite{TaoBlog} and Fortnow \cite{ForBlog}.

In this paper we make use of the above-mentioned approach to games involving nonrepetitve sequences.

The \emph{nonrepetitive game} over a symbol set $S$ is played by two players in the following way. The players collectively build a sequence choosing from $S$, one by one, consecutive terms of the sequence. The first player, Ann, is trying to avoid repetitions, while the second player, Ben, does not necessarily cooperate. Of course, just by mimicking Ann's moves Ben can force a lot of repetitions of size $1$. It turns out however that for large enough $S$ he cannot force any larger repetition at all! Pegden \cite{Peg11}, using his extension of the Lov\'{a}sz local lemma, proved that Ann has a strategy in the nonrepetitive game to build an arbitrarily long sequence with no repetition of size greater than $1$ over symbol set of size at least $37$ (no matter how perfidiously Ben is playing). In this paper we prove (Theorem \ref{thm:Pegdens-game}) that Ann can do the same on every set of symbols of size at least $6$. On the other hand, Ben can easily force nontrivial repetitions in a game on just $3$ symbols (see \cite{Peg11}). Thus, the minimum size of a set of symbols required to ensure Ann's strategy is $4$, $5$ or $6$.

The \emph{erase-repetition game} over a set of symbols $S$ is also a two-player game between Ann and Ben. As before they build a sequence picking symbols alternately from $S$ and appending them to the end of the sequence built so far. But this time whenever a repetition occurs the second instance of the repeated block is immediately erased and the next player continues extending the remaining prefix of the sequence. We prove (Theorem \ref{thm:erase-repetition-game}) that there is a strategy for Ann in this game to build an arbitrarily long nonrepetitive sequence over at least $8$ symbols.

The paper is organized as follows. Section \ref{sec:list-Thue} contains the generic argument proving that from any sequence of lists, each of size $4$, one can choose a nonrepetitive sequence. Section \ref{sec:preliminaries} introduce a bit of generating functions theory used in counting arguments. Sections \ref{sec:erase-repetition-game} and \ref{sec:nonrepetitive-game} are devoted to erase repetiton game and nonrepetitive game, respectively.


\section{The algorithm}\label{sec:list-Thue}
Consider the following randomized algorithm. The input is a sequence of lists $L_1, \ldots, L_n$. Random elements are chosen independently with uniform distribution.
\begin{algorithm-hbox}[!ht]
    \caption{Choosing a nonrepetitive sequence from lists of size $4$} 
    \label{alg-choosing-from-lists}
    $i \gets 1$\;
    \While{$i\leq n$}{
      $s_i\gets$ random element of $L_i$\;
      \uIf{\textup{$s_1,\ldots,s_i$ is nonrepetitive}}{
      $i\gets i+1$\;
      }
      \uElse{
	there is exactly one repetition, say $s_{i-2h+1},\ldots,s_{i-h},s_{i-h+1},\ldots,s_i$\;
	$i\gets i-h+1$
      }
    }
\end{algorithm-hbox}


The general idea is that if Algorithm \ref{alg-choosing-from-lists} works long enough for all evaluations of the random experiments, then a lot of repetitions occur, based on which we can compress a random string to a better extent than is actually possible.

\begin{theorem}\label{thm:4}
For every $n\geqslant 1$ and a sequence of sets $L_{1},\ldots ,L_{n}$, each of size 4, there is a nonrepetitive sequence chosen from $L_{1},\ldots ,L_{n}$.
\end{theorem}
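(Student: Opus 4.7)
The plan is an entropy-compression argument in the style of Moser and Tardos. Suppose for contradiction that for some $n$ and some lists $L_1, \ldots, L_n$ of size $4$, Algorithm~\ref{alg-choosing-from-lists} runs for at least $N$ iterations on every outcome of the $N$ random choices. There are $4^N$ equally likely outcomes, so an injective encoding of these outcomes by a set of size $o(4^N)$ as $N \to \infty$ forces a contradiction and shows that some outcome terminates, producing the required nonrepetitive sequence.

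Write $i_k$ for the value of the pointer at the start of iteration $k$, so that $i_1 = 1$ and $i_{N+1} \leq n$. The code of an outcome has two parts. First, the final effective prefix $s_1, \ldots, s_{i_{N+1}-1}$, a nonrepetitive sequence of length less than $n$, contributing at most $\sum_{j=0}^{n-1} 4^j < 4^n/3$ possibilities. Second, a log: the walk of pointer values $i_1, i_2, \ldots, i_{N+1}$, whose $k$-th step is either $+1$ (extension) or $-(h-1)$ for some repetition size $h \geq 1$. The encoding is injective: reading the log in reverse together with the running state, an ``extension'' entry recovers and pops the last symbol, while a ``repetition of size $h$'' entry signals that the state was just extended by duplicating its last $h$ symbols, and so the random choice at that iteration equals a symbol already in the prefix.

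To bound the number of logs, I relax the walk by allowing any next position in $\{1, 2, \ldots, i_k + 1\}$ (that is, any $h \geq 1$ for which $i_k - h + 1 \geq 1$, instead of the sharper $h \leq \lfloor i_k / 2 \rfloor$ required for an actual repetition). A generating-function computation, drawing on the machinery set up in Section~\ref{sec:preliminaries}, identifies these relaxed walks as Catalan-counted: there are exactly $C_{N+1} = \Theta(4^N / N^{3/2})$ of them. Multiplied by the $< 4^n/3$ final-state count, the total number of codes is $O(4^n \cdot 4^N / N^{3/2})$, which for fixed $n$ is $o(4^N)$. Hence the algorithm terminates with positive probability, and a terminating execution furnishes the required nonrepetitive sequence.

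The main technical step is this walk count: extracting the $1/N^{3/2}$ polynomial correction from the ``walk stays positive'' constraint. Without this correction a naive branching bound could only give $\Theta(n^N)$, which for $n$ comparable to $N$ does not beat $4^N$. The positivity constraint brings the growth down to the Catalan regime $\Theta(4^N / N^{3/2})$, and pleasingly this growth rate coincides with the list size $4$, giving exactly the polynomial margin needed to absorb the $4^n$ cost of encoding the final state. This polynomial (rather than exponential) slack is why the argument sits at the boundary of feasibility and does not extend to lists of size $3$.
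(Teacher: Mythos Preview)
Your proposal is correct and follows essentially the same entropy-compression argument as the paper: encode each run by the pair (difference walk, final nonrepetitive prefix), argue injectivity by decoding backwards, and bound the number of walks by a Catalan count to obtain $o(4^N)$ codes against $4^N$ outcomes. The only cosmetic difference is that you count the relaxed walks of length $N$ with arbitrary endpoint directly as $C_{N+1}$, whereas the paper stratifies by endpoint (total sum $k<n$) and bounds each stratum by $T_M$ with $T_{M+1}=C_M$, incurring an extra harmless factor of $n$.
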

\begin{proof}
Suppose for a contradiction that it is not possible to choose a nonrepetitive sequence from $L_1,\ldots,L_n$. This means that Algorithm 1 does not terminate on this sequence. In the following, by the $j$-th step of the algorithm, we mean the $j$-th iteration of the \texttt{while} loop. Set $M$ to be a sufficiently large integer. We are going to record, in two different ways, the possible scenarios of what algorithm does in the first $M$ steps.

Order arbitrarily the elements of each $L_i$. In each step the algorithm picks a random element from a list of size $4$. Let $r_j$ ($1\leq j\leq M$) be the position of the chosen element in the appropriate list. Clearly, $r_1,\ldots,r_M$ is a sequence of random variables with $4^M$ possible evaluations. When we fix evaluations of $r_1,\ldots,r_M$ we make Algorithm~\ref{alg-choosing-from-lists} deterministic.

For fixed evaluations of $r_1, \ldots, r_n$, let $d_1=1$ and $d_j$ ($2\leq j\leq M$) be the difference between the values of variable $i$ after $j$th and ($j-1$)th steps of the algorithm. The important properties are:
\begin{enumeratei}
 \item $d_j\leq 1$, for all $1\leq j\leq M$,\label{item-dj<=1}
 \item $\sum_{j=1}^k d_j\geq 1$, for all $1\leq k\leq M$.\label{item-sum-of-the-prefix>=1}
\end{enumeratei}
A pair $(D,S)$ is a \emph{log} if there is an evaluation of $(r_1,\ldots,r_M)$ such that $D$ is the corresponding sequence of differences and $S$ is the final sequence produced after $M$ steps of the algorithm. The key point is that a log encodes all values of $r_1,\ldots,r_M$ in a lossless fashion.
\begin{claim}
Every log corresponds to a unique evaluation of $r_1,\ldots,r_M$.
\end{claim}
\begin{proof}[Proof of Claim]
Given a log $((d_1,\ldots,d_M),S_M)$ with $S_M=(s_1,\ldots,s_l)$ we are going to decode the evaluation of $r_M$ (the last random choice taken) and $S_{M-1}$ -- the sequence constructed after $M-1$ steps. Then by simple iteration one can extract all the remaining values of $r_{M-1},\ldots,r_1$.

If $d_M=1$ then, the element generated in the $M$th step is appended to the end of $S_M$. Thus the value of $r_M$ is simply the position of $s_l$ in $L_l$. Moreover, no repetition occurred after the $M$th step and therefore $S_{M-1}=(s_1,\ldots,s_{l-1})$.

If $d_{M}\leq0$ then some symbols were erased after the $M$th step. But, since we know the size of the repeated sequence, namely $h=\norm{d_M}+1$, and only one part of it was erased, we can read and copy the appropriate block to restore the sequence before the erasure $(s_1,\ldots,s_{l},s_{l-h+1},\ldots,s_{l})$. Then we read the value of $r_M$ as a position of $s_l$ in $L_{l+h}$, and $S_{M-1}$ as $(s_1,\ldots,s_{l},s_{l-h+1},\ldots,s_{l-1})$ (in case of $h=0$ we put $S_{M-1}= (s_1,\ldots, s_l)$). 
\end{proof}
Let $T_M$ be the number of sequences $d_1,\ldots,d_M$ satisfying \itemref{item-dj<=1}, \itemref{item-sum-of-the-prefix>=1} and additionally $\sum_{j=1}^Md_j=1$. Such sequences are in close relation to plane trees, and are known to be enumerated by Catalan numbers, i.e., $T_{M+1}=C_M=\frac{1}{M+1}\binom{2M}{M}=o(4^M)$. Note that every feasible sequence of differences in a log has total sum less than $n$ (as Algorithm \ref{alg-choosing-from-lists} never terminates). The number of sequences satisfying \itemref{item-dj<=1}, \itemref{item-sum-of-the-prefix>=1} but with total sum equal $k$ (fixed $k\geq 1$) is at most $T_{M}$. Thus, we conclude that the number of all feasible difference sequences of size $M$ is at most $n\cdot T_{M}$. Clearly, for every feasible sequence of differences $D$ the number of sequences which can occur in log with $D$ is at most $4^n$. Since the number of logs is exactly $4^{M}$ we get
\[
 4^{M} \leq n \cdot T_M \cdot 4^n = o(4^M)
\]
which is a contadiction for large enough $M$. This means that the number of realizations which do not generate a  nonrepetitive sequence of length $n$ is smaller than the number of all realizations.
\end{proof}

\section{Preliminaries}\label{sec:preliminaries}
We make some use of generating functions theory. We consider only algebraic functions. A generating function $t(z)= \sum_n T_n z^n$ with positive radius of convergence is algebraic if there exists a nonconstant polynomial $P(z,t)\in \Cpl[z,t]$  (\emph{defining polynomial}) such that $P(z,t(z))$ is constantly zero within the disc of convergence of $t(z)$. It is a well known fact that, if the radius of convergence of $\sum_n T_n z^n$ is strictly greater than $\alpha$, then $T_n= o(\alpha^{-n})$. The following observation is fundamental in analysis of algebraic generating functions, the thorough study of which can be found in \cite{FS09} (chapter VII.7).
\begin{obs}
	Let $t(z)= \sum_n T_n z^n$ be a nonpolynomial algebraic generating function with defining polynomial $P(z,t)$. Then the radius of convergence of $t(z)$ is one of the roots of the discriminant of $P(z,t)$ with respect to the variable $t$ (i.e. the resultant of $P(z,t)$ and $\partial_t P(z,t)$ with respect to $t$).
\end{obs}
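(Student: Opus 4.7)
The plan is to locate the singularities of $t(z)$ and show each must be a root of the discriminant. The main tool is the complex-analytic implicit function theorem: at any point $z_0$ with $P(z_0, t(z_0)) = 0$ and $\partial_t P(z_0, t(z_0)) \neq 0$, there is a unique analytic function $\tilde{t}$ in a neighborhood of $z_0$ satisfying $\tilde{t}(z_0) = t(z_0)$ and $P(z, \tilde{t}(z)) \equiv 0$. By uniqueness of analytic continuation, $\tilde{t}$ agrees with $t$ wherever both are defined, so $t$ extends analytically across $z_0$.

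First I would argue that any singularity $z_0$ of $t(z)$ on its circle of convergence must therefore satisfy $\partial_t P(z_0, t(z_0)) = 0$ together with $P(z_0, t(z_0)) = 0$, since otherwise the previous step would extend $t$ past $z_0$. At such a point, the polynomials $P(z_0, \cdot)$ and $\partial_t P(z_0, \cdot)$ share a common root in $t$, which by the elimination-theoretic characterization of the resultant means the discriminant $\Delta(z) := \operatorname{Res}_t(P, \partial_t P)$ vanishes at $z_0$.

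To identify the radius of convergence $\rho$ itself as a root of $\Delta$, I would invoke the elementary fact that every power series must have a singularity somewhere on its circle of convergence. In the applications of interest the coefficients $T_n$ are nonnegative, so Pringsheim's theorem places such a singularity at $z = \rho$, and the previous step then forces $\Delta(\rho) = 0$.

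The main subtlety I expect is ensuring $\Delta \not\equiv 0$, for otherwise the statement would be vacuous. This happens precisely when $P$ and $\partial_t P$ share a nontrivial common factor in $\Cpl(z)[t]$, equivalently when $P$ has a repeated irreducible factor. Since $t(z)$ is nonpolynomial and algebraic, it admits a well-defined minimal polynomial over $\Cpl(z)$ that is irreducible and of $t$-degree at least one. Replacing $P$ by this minimal polynomial, which is also a defining polynomial for $t$, $\partial_t P$ has strictly smaller $t$-degree and is therefore coprime to $P$, so $\Delta \not\equiv 0$. The remaining pieces — the implicit function theorem and Pringsheim's theorem — are standard and would only require brief invocation.
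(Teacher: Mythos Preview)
The paper does not actually prove this observation; it is quoted as a standard fact with a pointer to Flajolet--Sedgewick. Your outline is the classical argument found there, and you are right to flag that Pringsheim's theorem (hence nonnegativity of the $T_n$) is needed to place the singularity at the real point $z=\rho$: without that hypothesis the statement as written fails, e.g.\ for $t(z)=\sqrt{1+z^2}$ the resultant is $-4(1+z^2)$, whose roots $\pm i$ do not include $\rho=1$.

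There is, however, one genuine gap. Your key step ``$\partial_t P(z_0,t(z_0))=0$ together with $P(z_0,t(z_0))=0$'' presupposes that $t(z_0)$ is a finite complex number to which the implicit function theorem can be applied. But an algebraic branch may perfectly well blow up at its dominant singularity: already $t(z)=1/(1-z)$ with $P(z,t)=(1-z)t-1$ has a pole at $\rho=1$, and there is no finite point $(z_0,t(z_0))$ at which to run your argument. The standard treatment therefore splits the exceptional set into two parts: points where two finite branches collide (zeros of the discriminant proper) and points where a branch escapes to infinity (zeros of the leading coefficient $a_d(z)$ of $P$ in $t$). The paper's formulation covers both, since $\mathrm{Res}_t(P,\partial_t P)$ carries $a_d$ as a factor. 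To complete your proof you should add: if $t(z)$ is unbounded as $z\to\rho^-$, divide $P(z,t(z))=0$ by $t(z)^d$ and pass to the limit to force $a_d(\rho)=0$, hence the resultant vanishes at $\rho$ in this case too. (For the two specific functions used in the paper one has $t(z)<1$ on $(0,\rho)$, so the issue does not arise there, but it belongs in a proof of the general statement.) Your final paragraph about replacing $P$ by the minimal polynomial is unnecessary: if $\Delta\equiv 0$ the conclusion is vacuously true, and if $\Delta\not\equiv 0$ your implicit-function argument already applies to the given $P$ without modification.
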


The coefficients of the functions we use are nonnegative integers. In such cases it is easy to see that the radius of convergence is not greater than 1, whenever a function has infinite number of nonzero coefficients.  In order to bound the growth of the sequence of coefficients of such a function,  we calculate the discriminant of its defining polynomial $P(z,t)$ with respect to the variable $t$, and look for its positive real root in the interval $(0,1]$. If there is only one such root, it must be the radius of convergence of the function.

\section{The erase-repetition game}\label{sec:erase-repetition-game}
\begin{theorem}\label{thm:erase-repetition-game}
In the erase-repetition game over a symbol set of size 8, there exists a strategy for Ann to build an arbitrarily long nonrepetitive sequence.
\end{theorem}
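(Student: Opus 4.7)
The plan is to adapt the entropy-compression argument of Theorem~\ref{thm:4} to the adversarial game setting. Ann's strategy will be to pick her symbol uniformly at random from $S$, and we fix an arbitrary deterministic strategy $\sigma$ for Ben. Toward a contradiction, suppose that for some $n$, no run of the game over the first $M$ rounds ever produces a nonrepetitive sequence of length $n$, regardless of Ann's coin flips.

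To each random realization I attach the log $(D,S_M)$, where $D=(d_1,\ldots,d_M)$ records the per-round length differences and $S_M$ is the sequence after $M$ rounds. The backward-decoding argument of Theorem~\ref{thm:4} extends almost verbatim: in a round belonging to Ann, $(d_j,S_j)$ recovers $S_{j-1}$ together with the symbol Ann played; in a round belonging to Ben, $(d_j,S_j)$ still recovers $S_{j-1}$, after which Ben's move is simply $\sigma(S_{j-1})$ and costs no additional bits. Since the map from Ann's $\lceil M/2\rceil$ random choices to logs is an injection, the number of logs must be at least $8^{\lceil M/2\rceil}$.

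The counting side is where the main obstacle lies. Final sequences contribute at most $8^{n-1}$, a constant in $M$, so it suffices to bound the number $T_M$ of admissible difference sequences by $o\bigl((\sqrt{8})^M\bigr)$. The \L ukasiewicz/Catalan bound $T_M=\Theta(4^M/M^{3/2})$ used in Theorem~\ref{thm:4} is not strong enough here, since $4>\sqrt{8}\approx 2.83$. To refine it, I would exploit two features absent from the list-coloring setting: the alternation (every second round is deterministic rather than random) and the structural constraint that a repetition of size $h$ can only occur from a state of length at least $2h-1$. Grouping consecutive Ann/Ben rounds into super-rounds and tracking the admissible super-round transitions, one sets up an algebraic generating function $T(z)=\sum_{M} T_M z^M$ defined by a bivariate polynomial $P(z,T)\in\mathbb{C}[z,T]$.

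The quantitative core is then to apply the discriminant method of Section~\ref{sec:preliminaries}: compute $\mathrm{disc}_T P(z,T)$, isolate its unique positive real root in $(0,1]$, which must equal the radius of convergence of $T(z)$, and verify that this radius strictly exceeds $1/\sqrt{8}$. I expect the inequality to hold with only a thin margin for $|S|=8$, pinning the alphabet threshold exactly at~$8$. Once it is in place, the bound $8^{\lceil M/2\rceil}\leq n\cdot T_M\cdot 8^{n-1}$ fails for all sufficiently large $M$, producing the contradiction and confirming that Ann's uniform random strategy succeeds with positive probability in building an arbitrarily long nonrepetitive sequence.
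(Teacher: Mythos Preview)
Your plan has a genuine gap: the uniform-random strategy for Ann is too weak to make the counting close. With Ann playing uniformly from all of $S$, nothing prevents repetitions of size $1$, $2$, or $3$ from arising on either player's move, so the admissible difference sequences are exactly the \L ukasiewicz paths with steps in $\{1,0,-1,-2,\ldots\}$ and positive prefix sums. Their count is genuinely $\Theta(4^M)$; the two ``refinements'' you propose do not reduce it. The constraint that a repetition of size $h$ requires current length $\geq 2h-1$ is precisely the prefix-sum condition already built into the Catalan bound, and the alternation does not restrict the set of possible $d_j$ values. Your decoding paragraph is correct that Ben's \emph{symbol} costs no extra bits, but his $d_j$ still has to be recorded in $D$ for the backward reconstruction to work, so you are counting length-$M$ paths, not length-$M/2$ ones. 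Any polynomial $P(z,T)$ you set up for these paths will have its dominant singularity at $1/4<1/\sqrt{8}$, and the inequality $8^{\lceil M/2\rceil}\leq n\cdot T_M\cdot 8^{n-1}$ will simply be true for large $M$, not contradictory.

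The missing idea is that Ann's strategy itself must \emph{forbid short repetitions}, trading a little entropy for a large structural gain in the log. In the paper Ann plays a random symbol different from the last three symbols of the current sequence; one then checks that repetitions of size $2$ and $3$ become impossible for either player, and Ben can only produce repetitions of size $1$. Erasing those size-$1$ events from the log leaves a reduced difference sequence with steps in $\{1,-3,-4,-5,\ldots\}$, whose generating function $t=z+zt^4/(1-t)$ has radius of convergence $\rho\approx 0.457>5^{-1/2}$. Now Ann's entropy is only $(C-3)^M=5^M$, but the reduced logs number $o(5^M)$, and the contradiction goes through for $C\geq 8$. Without this strategic restriction on small repetitions, no amount of super-round bookkeeping will push the radius past $1/\sqrt{8}$.
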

\begin{proof}
We fix $n$ and prove that Ann has a strategy to build a nonrepetitive sequence of size $n$. In fact, the strategy for Ann will be randomized and we will show that for every strategy of Ben there is an evaluation of random experiments leading to the sequence of size $n$ against that strategy. The fact that for every strategy of Ben there is a strategy for Ann to build a sequence of size $n$ implies that Ann simply has a strategy to build such a sequence.

Let $C$ be the size of a symbol set. The argument to be presented turns out to work for $C\geq 8$. The strategy for Ann is the following: choose a random element distinct from the last three symbols in the sequence constructed so far. In this setting, Ann does not generate repetitions of size $1$, $2$ and $3$. Obviously, Ben can cause many repetitions of size $1$ but repetitions of size $2$ and $3$ are not possible. Indeed, in order to get a repetition of the form 'abcabc' the last three symbols must be generated by Ben. Consider Ann's move just before Ben puts 'b' in the repeated block. As she could not play preceding symbol 'a' she must have invoked a repetition. But all her repetitions are of size at least $4$ and therefore the repeated block must have ended with 'abca'. This would mean that she played 'a' which is not possible as this symbol is not distinct from the last three in the current sequence at that step. Analogous argument proves that repetitions of size 2 are also impossible.

Fix $n$ and a strategy of Ben. Take  $M$ sufficiently large and consider possible scenarios of the first $2M$ moves of the game against that fixed Ben's strategy. Suppose, for a contradiction, that the size of a sequence  after $2M$ moves is always (for any evaluation of Ann's choices) less than $n$. Ann generates exactly $M$ elements. Let $r_j$ ($1\leq j\leq M$) be the $j$th symbol generated by Ann. Clearly, $r_1,\ldots,r_M$ is a sequence of random variables with at least $(C-3)^M$ possible evaluations. When we fix an evaluation of $r_1,\ldots,r_M$ the course of the whole game is determined.

Let $h_j$ ($1\leq j\leq 2M$) be the length of the sequence generated after $j$ moves (including possible erasure invoked by the $j$th move) and let $d_1,\ldots,d_{2M}$ be the sequence of differences: $d_1=1$, $d_j=h_j-h_{j-1}$ for $2\leq j\leq 2M$. Note that $d_j=1$ means that there is no erasure after $j$th move and $d_j<1$ indicates that repeated block of size $\norm{d_j}+1$ was removed. A pair $(D,S)$ is a \emph{game log} and $D$ is \emph{feasible} if there is an evaluation of $r_1,\ldots,r_M$ such that $D$ is the sequence of differences and $S$ is the final sequence produced after $2M$ moves. A pair $(D,S)$ is a \emph{reduced game log} if it is a log but with all zeros in $D$ erased. Note that any sequence of differences $D=(d_1,\ldots,d_m)$ in a reduced log satisfies:
\begin{enumeratei}
\item $m\leq 2M$,\label{item-erase-repetition-m<=2M}
\item $d_j\in\set{1,-3,-4,-5,\ldots}$, for all $1\leq j\leq m$,\label{item-erase-repetition-dj<=1}
\item $\sum_{j=1}^k d_j\geq 1$, for all $1\leq k\leq m$.\label{item-erase-repetition-sum-of-the-prefix>=1}
\end{enumeratei}

\begin{claim}
Every reduced log corresponds to a unique evaluation of $r_1,\ldots,r_M$.
\end{claim}
\begin{proof}
Given a reduced log $((d_1,\ldots,d_m),S_m)$ with $S_m=(s_1,\ldots,s_l)$, we decode all random choices taken by Ann in two steps. First we reconstruct the sequence $x_1,\ldots,x_m$ of all symbols introduced in the game except those (of Ben) generating repetitions of size $1$. The introduced symbols generating repetitions of size $1$ are called \emph{bad}, other symbols are \emph{good}. The move is good (bad) if a good (bad) symbol is introduced. The number of good moves played is exactly the size of the difference sequence in the reduced log, namely $m$. Note that $S_m$ is the sequence formed after the $m$th good move (even if a bad move is played afterwards, it does not change the sequence).

We reconstruct the sequence of good symbols backwards, i.e., we first decode $x_m$, which is the last good symbol introduced, and the sequence $S_{m-1}$ constructed after $m-1$ good moves. Then, by simple iteration, we extract all the remaining good symbols $x_{m-1},\ldots,x_1$.

If $d_m=1$, then the $m$th good symbol introduced did not invoke a repetition. Thus, the last good symbol introduced is the last symbol of the final sequence, i.e. $x_m=s_l$ and $S_{m-1}=(s_1,\ldots,s_{l-1})$.

If $d_m\leq 0$, then some symbols were erased after the $m$th good move. But since we know the size of the repetition, namely $h=\norm{d_m}+1$, and only one half of it was erased, we can read and copy the first part of the repeated block to restore $S_{m-1}=(s_1,\ldots,s_{l},s_{l-h+1},\ldots,s_{l-1})$ and $x_m=s_l$.

Once we get all $x_1,\ldots,x_m$, we read the sequence from the beginning and check whether the symbols agree with the strategy of Ben we fixed. The difference appears only where Ben introduces a bad symbol. There we extend the sequence with this symbol and continue. This way we reconstruct the sequence of all symbols introduced in the game and clearly every second symbol is chosen by Ann.
\end{proof}

By a \emph{game walk} we mean a sequence $d_1,\ldots,d_m$ satisfying \itemref{item-erase-repetition-dj<=1}, \itemref{item-erase-repetition-sum-of-the-prefix>=1} and additionally $\sum_{j=1}^m d_j=1$. Let $T_m$ be the number of gamewalks of length $m$. By our assumption that Ann never wins, every feasible sequence of differences in a reduced log sums up to a number smaller than $n$. The number of sequences of size $m$ satisfying \itemref{item-erase-repetition-dj<=1}, \itemref{item-erase-repetition-sum-of-the-prefix>=1} but with a total sum $k$ (for fixed $k\geqslant1$) is bounded by $T_{m+3}$ (just append two '1's and '$-(k+1)$' to the end). Note also that $T_m\leq T_{m+1}$ for $m>1$. Indeed, for a given sequence $d_1,\ldots,d_m$ let $i$ be the least index with $d_i<0$ (there must be such provided $m>1$). Then $d_1,\ldots,d_{i-1},1,d_i+1,d_{i+1},\ldots,d_m$ is a sequence counted by $T_{m+1}$ and this extension is injective. Finally, all feasible sequences of differences are of size at most $2M$. All this yields that the number of feasible difference sequences in a reduced log is at most $2M\cdot n\cdot T_{2M+3}$. For a given feasible sequence of differences $D$, the number of final sequences which can occur with $D$ in a reduced log is bounded by $C^n$. Thus, the number of reduced logs is bounded by
\[
2M\cdot n\cdot T_{2M+3}\cdot C^n.
\]

We turn to the approximation of $T_{2M}$. Every game walk $d_1,\ldots,d_m$ is either a single step up (i.e., $m=1$, $d_1=1$), or it can be uniquely decomposed into $\norm{d_m}+1$ subsequent game walks of total length $m-1$. The $j$th component of the decomposition is the substring between the last visit of height $j-1$ and the last visit of height $j$ (i.e. between the last $k$ such that $\sum_{i=1}^k d_i= j-1$ and last $l$ such that $\sum_{i=1}^l d_i= j$). This description together with the fact that if $m>1$, then $\norm{d_M}+1\geq4$, certify that the generating function $t(z)= \sum_{n\in N} T_n z^n$ satisfies the following functional equation:
\[
t(z)= z+z(t(z)^4+t(z)^5+\ldots) ,
\]
where the right hand side is $z+ z \frac{t(z)^4}{1-t(z)}.$ From this equation we extract a polynomial 
\[
P(z,t)= z t^4  +t^2 -(1+z) t + z
\]
 that defines $t(z)$.
In the standard way we calculate the discriminant polynomial obtaining:
\[
	-4	-19 z + 32 z^2 - 2 z^3 + 36 z^4 + 229 z^5 .
\]
This polynomial has only one positive real root equal to $\rho=0.457\ldots>{5}^{-\frac{1}{2}}$. Pick any $\alpha$ with $\rho^{-2}<\alpha<5$. Then $T_{2M} =o(\alpha^M)$.

By the claim the number of realizations is exactly the number of reduced logs. That gives
\[
(C-3)^M \leq 2M \cdot n \cdot T_{2M+3} \cdot C^n = 2M \cdot n \cdot o(\alpha^{M}) \cdot C^n = o(5^M).
\]
Thus for $C\geq 8$ and sufficiently large $M$ we obtain a contradiction.
\end{proof}

\section{The nonrepetitive game}\label{sec:nonrepetitive-game}
\begin{theorem}\label{thm:Pegdens-game}
In the nonrepetitive game over a symbol set of size 6, there is a strategy for Ann  to build an arbitrarily long sequence with no repetitions of size greater than $1$.
\end{theorem}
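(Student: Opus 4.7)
The proof will follow the template of Theorem~\ref{thm:erase-repetition-game}: we design a randomized strategy for Ann, fix Ben's strategy together with the horizon $n$, and, for the sake of contradiction, assume that for every evaluation of Ann's random choices the resulting play produces a repetition of size greater than $1$ before reaching length $n$. We run the analysis through $M$ of Ann's turns for $M$ sufficiently large and compare the number of realizations of Ann's random choices to the number of encoded game logs.

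The natural strategy for Ann is to pick, at each of her turns, a uniformly random symbol from a subset of $[C]$ of size $C - k$ obtained by discarding the last $k$ symbols of the current sequence. The parameter $k$ is to be chosen so that Ann herself cannot close a short repetition and---by the same alternating argument as in the erase-repetition proof---Ben is also blocked from closing certain short repetitions on his next move. We then define a game log $(d_1, \ldots, d_m; S_m)$ in exact analogy with Section~\ref{sec:erase-repetition-game}: each $d_j \in \{1\} \cup \{-a+1, -a, -a-1, \ldots\}$ records either a fresh Ann move ($d_j = 1$) or a (virtually rewound) repetition whose size is $|d_j| + 1$, where $a$ is the minimum size of a repetition that the strategy still permits. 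The backward decoding claim shows that the log determines Ann's random choices uniquely, via the same reconstruction (reading the final sequence from right to left and copying the appropriate block every time $d_j < 0$) used in the previous theorems.

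The enumeration of feasible logs reduces to counting game walks with the step set above; the generating function $t(z) = \sum_m T_m z^m$ satisfies a functional equation of the form
\[
t(z) = z + z \cdot \frac{t(z)^a}{1 - t(z)},
\]
yielding an explicit defining polynomial $P(z,t)$. The methods of Section~\ref{sec:preliminaries} then produce the growth bound $T_m = o(\rho^{-m})$, where $\rho \in (0,1]$ is the unique positive real root of the $t$-discriminant of $P$. The desired contradiction reduces to the inequality $(C - k)^M > \operatorname{poly}(M,n) \cdot T_{2M+O(1)} \cdot C^n$, i.e.\ to $C - k > \rho^{-2}$.

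The main obstacle is calibrating $k$ so that this inequality goes through at $C = 6$: a small $k$ leaves $a$ small and makes $\rho^{-2}$ large, while a large $k$ shrinks the randomness base $C - k$. Certifying the critical discriminant calculation for the chosen strategy is the concrete computation that has to replace the one done in Section~\ref{sec:erase-repetition-game}; once the right avoidance parameter is found and the resulting polynomial root is shown to satisfy $\rho^{-2} < C - k$ at $C = 6$, the rest of the argument mirrors the proof of Theorem~\ref{thm:erase-repetition-game} almost verbatim, producing a winning strategy for Ann on $6$ symbols.
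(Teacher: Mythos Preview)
Your sketch captures the overall entropy-compression scaffold, but it misses the two ideas that are actually responsible for getting the bound down to $6$.

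First, the simulation in the nonrepetitive game is \emph{not} alternating in the way the erase-repetition game is: after a backtrack of odd size the same player moves again, so between two consecutive Ann moves Ben may play $0$, $1$, or $2$ times. A per-move log ``in exact analogy with Section~\ref{sec:erase-repetition-game}'' therefore has length up to $3M$, not $2M+O(1)$, and from the halved height change between consecutive Ann moves alone one cannot reconstruct what happened in between. The paper resolves this by logging only at Ann's moves (the heights there are even, so one records $d_j=(h'_{j+1}-h'_j)/2$) and attaching a $4$-valued \emph{type} whenever $d_j\le-2$; the type records which of four possible Ann/Ben patterns produced that drop, and this is exactly the extra information needed for lossless decoding. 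The resulting functional equation is
\[
t(z)=z+zt(z)^2+4z\,\frac{t(z)^3}{1-t(z)},
\]
not $t=z+zt^a/(1-t)$, and its singularity sits above $1/4$, giving $T_M=o(4^M)$.

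Second, and more decisively, an ``avoid the last $k$ symbols'' rule cannot reach $C=6$ for any $k$. To block repetitions of size $2,3,4$ with that rule you need $k\ge 4$, leaving only $C-4$ random choices; feeding this into either your encoding or the paper's yields $C\ge 8$ at best. The paper instead uses a \emph{conditional} exclusion rule that removes exactly two symbols at every Ann move (so $(C-2)^M$ realizations) yet still blocks all repetitions of size $2$, $3$, and $4$: exclude $s_{m-2}$; if $s_{m-1}=s_{m-4}$ also exclude $s_{m-3}$; otherwise exclude $s_{m-4}$. A short case analysis shows this forbids size-$4$ repetitions as well. It is this two-for-four trade, together with the typed encoding above, that makes the final inequality $(C-2)^M\le o(4^M)$ and hence $C=6$ work. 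Your template leaves both of these as the ``calibration'' to be done, but they are the substance of the proof rather than a parameter choice.
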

\begin{proof}
We fix $n$ and prove that Ann has a strategy to build a sequence of size $n$ without repetitions of size greater than $1$. As before we consider randomized Ann's strategy and we  show that for every strategy of Ben there is an evaluation of random experiments leading to the generation of a nonrepetitive sequence of size $n$. This means that Ben cannot have winning strategy. Therefore, there exists a winning strategy for Ann.

In this proof, by a repetition we mean a repetition of size greater than $1$.

Let $s_1,\ldots,s_{m-1}$ be the sequence already generated in the game and suppose that it is Ann's turn ($m$ is odd). The strategy for Ann goes as follows: choose any symbol at random, but
\begin{enumeratei}
 \item exclude $s_{m-2}$,\label{item: excluding repetitions of size 2}
 \item if $s_{m-1}=s_{m-4}$, then exclude $s_{m-3}$,\label{item: excluding repetitions of size 3}
 \item if only one symbol has been excluded in \ref{item: excluding repetitions of size 2} and \ref{item: excluding repetitions of size 3}, then exclude $s_{m-4}$.\label{item: excluding repetitions of size 4}
\end{enumeratei}

This stategy explicitly ensures that no repetitions of size $2$ and $3$ occur in the game. It turns out that also repetitions of size $4$ are avoided. Suppose for a contradiction that at some point in the game a sequence with a suffix of the form $x_1x_2x_3x_4x_1x_2x_3x_4$ is produced. Suppose also that Ann introduces the last symbol, namely $x_4$. As she did not prevent a repetition of size $4$, the rule \itemref{item: excluding repetitions of size 4} of the strategy did not exclude a symbol and therefore rule \itemref{item: excluding repetitions of size 3} must have been invoked. In particular, $x_3=x_4$. But this means that in the previous move of Ann (when she introduced $x_2$ in the repeated block) the symbols excluded by \itemref{item: excluding repetitions of size 2} and \itemref{item: excluding repetitions of size 3} were the same, so, rule \itemref{item: excluding repetitions of size 4} must have been applied. But that rule excludes $x_2$, a contradiction. Analogous reasoning works for the case when Ben finishes a repetition of size $4$.

Fix a strategy for Ben. We simulate the play between randomized Ann and this fixed strategy, and whenever a repetition of size $h$ occurs in the $m$th move (of the real game), we backtrack to the  move $m-h+1$. This means that
we remove the whole repeated segment and continue the simulation starting from the move $m-h+1$ again (with independent random experiments).

A \emph{search sequence} is the sequence of consecutive symbols chosen by players in the simulation. 
Note that it is not possible for Ben to introduce three symbols in a row in the search sequence. 
Indeed, if he introduces two symbols in a row, then there must have been a repetition (of odd size) after the first symbol. 
Thus, the second one is the same as the symbol just erased at this position (as Ben's strategy is fixed in the simulation). 
This means that the second symbol could not generate repetition and therefore Ann is next to play in the simulation.

The \emph{weight} of a search sequence is the number of symbols chosen by Ann in the sequence. Fix 
$M$ large enough. We are going to show that there is a scenario of the first $M$ random experiments (first $M$ moves of Ann)
leading the simulation to an outcome sequence of size $n$. This will prove that Ann has a strategy to build a sequence
of size $n$ against the fixed strategy of Ben. For a contradiction we suppose that all outcome sequences generated after $M$ moves of Ann in the simulation are of length less than $n$ for all possible evaluations of random experiments.

Clearly, a search sequence of weight $M$ is uniquely determined by the sequence of $M$ Ann's choices. Let $r_1,\ldots,r_M$
be the symbols chosen by Ann. As she always chooses  one symbol out of at least 
$C-2$ symbols, the sequence $r_1,\ldots,r_M$ has at least $(C-2)^M$ possible evaluations. A search sequence induced by an evaluation of $r_1,\ldots,r_M$
is called a \emph{realization} of this evaluation.

Let $h_j$ be the length of the current sequence just \emph{before} the $j$th step (move) of the simulation. The sequence $(h_j)$ is called a \emph{height sequence}. If Ann introduces a symbol in the $j$th step, then her next move is in step $k\in\{j+1,j+2,j+3\}$ (as Ben never plays three times in a row). There are only few possible extensions of the height sequence from $h_j$ up to $h_k$:
\begin{enumeratenum}\setcounter{enumi}{-1}
\item Ann makes no repetition in the $j$th step and Ben makes no repetition in the  $(j+1)$th step. In this case $k=j+2$ and $h_{j+1}=h_j+1$, $h_{j+2}=h_j+2$.\label{item:type0}
\item Ann makes a repetition of odd size, at least $5$, in the $j$th step and therefore she plays again in the $(j+1)$th step. Here $k=j+1$ and $h_{k}\leq h_j-4$.\label{item:type1}
\item Ann makes a repetition of even size, at least $6$, in the $j$th step and Ben plays no repetition in the $(j+1)$th step. Here $k=j+2$ and $h_k \leq h_j-4$.\label{item:type2} 
\item Ann makes no repetition in the $j$th step and Ben produces a repetition of even size, at least $6$, in the $(j+1)$th step. Here again $k=j+2$ and $h_k\leq h_j-4$.\label{item:type3}
\item Ann makes no repetition in the $j$th step. Ben makes a repetition of odd size, at least $5$, in the $(j+1)$th step. Then he plays no repetition in the $(j+2)$th step. Here $k=j+3$ and $h_k\leq h_j-2$.\label{item:type4}
\end{enumeratenum}
We want to get rid of some redundancy in the height sequence. More precisely, we encode the sequence of heights into its subsequence consisting of $h_j$'s corresponding to Ann's moves with a little extra information. Let $h_j,h_k$ be again the heights of the current sequence right before any two consecutive moves of Ann. Note that
\begin{itemize}
 \item If $h_k>h_{j}$, then the sequence of heights between $h_j$ and $h_k$ is of type \ref{item:type0}.
 \item If $h_k=h_{j}-2$, then the sequence of heights between $h_j$ and $h_k$ is of type \ref{item:type4}.
 \item If $h_k\leq h_j-4$, then the sequence of heights between $h_j$ and $h_k$ is of type \ref{item:type1}, \ref{item:type2},\ref{item:type3} or \ref{item:type4}.
\end{itemize}
Therefore, in order to record the whole height sequence it is enough to remember the subsequence $h'_1,\ldots,h'_M$ of heights corresponding to Ann's moves and additionally, if $h'_{j+1}\leq h'_{j}-4$, to record  $\type(h'_{j},h'_{j+1})\in\set{1,2,3,4}$, which is the type of the original height sequence between symbols corresponding to $h'_j$ and $h'_{j+1}$.

Finally, note that all the $h'_j$'s are even (as the current sequence before Ann's move contains an even number of symbols). The \emph{reduced sequence of differences} is: $d_1=1$, $d_{j+1}=(h'_{j+1}-h'_{j})/2$ for $1\leq j< M$, and the \emph{type function} $\type(d_{j+1})=\type(h_{j},h_{j+1})$, provided the latter is defined. Note that
\begin{enumeratei}
\item $d_j\leq 1$,\label{item-nonrepetitive-dj<=1}
\item $\sum_{j=1}^k d_j\geq 1$, for all $1\leq k\leq M$,\label{item-nonrepetitive-sum-of-the-prefix>=1}
\item $\type(d_j)$ is defined if and only if $d_j\leq -2$.\label{item-types}
\end{enumeratei}

A pair $((D,\type),S)$ is a \emph{search log} if there is an evaluation of $r_1,\ldots,r_M$ such that $D$ is the reduced sequence of differences in the realization of $r_1,\ldots,r_M$, $\type$ is a type function of $D$, and $S$ is the final sequence produced after $M$ steps of Ann in this realization of the search procedure.
\begin{claim}
Every search log corresponds to a unique evaluation of $r_1,\ldots,r_M$.
\end{claim}
\begin{proof}
Given a search log $((D,\type_D),S)$ where $S=(s_1,\ldots,s_l)$ we decode the evaluation of $r_1,\ldots,r_M$ in a few steps. First we extract the height sequence $h_1,\ldots,h_m$ from $(D,\type_D)$ and put additionally $h_{m+1}=\norm{S}$. Now, we are going to describe how to reconstruct the sequence $x_1,\ldots,x_m$ of all symbols introduced in the simulation. This is done in backward direction, i.e., we decode first $x_m$ and the sequence $S_{m-1}$ constructed after $m-1$ steps of the simulation. Then by simple iteration we extract all the remaining symbols $x_{m-1},\ldots,x_1$.

If $h_{m+1}-h_m=1$, then the introduction of $x_m$ did not invoke a repetition. Thus, $x_m$ is the last symbol in the final sequence $S$, i.e., $x_m=s_l$ and $S_{m-1}=(s_1,\ldots,s_{l-1})$.

If $h_{m+1}-h_m \leq 0$, then some symbols were erased after the introduction of $x_m$. But we know the size of the repetition, namely $h=\norm{h_{m+1}-h_m}+1$, and since only one half of it was erased, we can copy the appropriate block to restore  $s_{m-1}=(s_1,\ldots,s_{l},s_{l-h+1},\ldots,s_{l-1})$ and $x_m=s_l$.

Once we get all $x_1,\ldots,x_m$ we read the sequence from the beginning and track the current sequence in the simulation. Every time the current sequence is of even length the next symbol is introduced by Ann.
\end{proof}

By a \emph{typed search walk} we mean a pair $((d_1,\ldots,d_M),\type)$ satisfying \ref{item-nonrepetitive-dj<=1}, \ref{item-nonrepetitive-sum-of-the-prefix>=1}, \ref{item-types} and additionally $\sum_{j=1}^M d_j=1$. Let $T_M$ be the number of typed search walks of length $M$ (i.e., $D$ is of length $M$). By our assumption that Ann never wins, every feasible sequence of differences in a typed search walk sums up to less than $n$.  The number of typed search walks of length $M$ satisfying \ref{item-nonrepetitive-dj<=1}, \ref{item-nonrepetitive-sum-of-the-prefix>=1}, \ref{item-types} with total sum  $k$ (fixed $k\geqslant1$) is at most $T_{M+1}$ (just append $-(k-1)$ to the end and pick arbitrary type, if necessary). Furthermore, $T_m\leq T_{m+1}$ for $m>1$. All this implies that the number of feasible typed search walks is $n\cdot T_{M+1}$. For a given feasible typed search walk $(D,\type)$ the number of final sequences which can occur with $(D,\type)$ in a search log is bounded by $C^n$. Thus, the number of reduced logs is bounded by
\[
n\cdot T_{M+1}\cdot C^n.
\]

We turn to the approximation of $T_{m}$. Every typed search walk $((d_1,\ldots,d_m),\type)$ is either a single step up (i.e., $m=1$, $d_1=1$), or it can be uniquely decomposed into $\norm{d_m}+1$ subsequent search walks of total length $m-1$ and additionally into the type of $d_m$ if it is defined (i.e., if $d_m\leq -2$). This decomposition (analogous as in the proof of Theorem \ref{thm:Pegdens-game}) gives the following functional equation for the generating function $t(z)$:
\[
	t(z)=  z + zt^2(z) + 4z(t^3(z)+t^4(z)+t^5(z)+\ldots),
\]
where $z$ stands for a trivial one-step-up walk, $zt^2(z)$ stands for the case $d_m=-1$ in which $d_m$ has no type, and the last term stands for the case $d_m\leq -2$. The right hand side of the equation is in fact equal to $z+ zt^2(z) + 4 z \frac{t^3(z)}{1-t(z)}$. From that form we derive the defining polynomial for $t(z)$:
\[
	P(z,t)= -t + t^2 + z - t z + t^2 z + 3 t^3 z.
\]
In the standard way we calculate the discriminant polynomial obtaining:
\[
	1 +12 z - 24 z^2 - 80 z^3 - 288 z^4.
\]
The radius of convergence of $t(z)$ is one of the roots of the above polynomial. This polynomial has only one positive real root in $\rho=0.2537..>4^{-1}$. Therefore $T_M= o(4^M)$.

By the claim, the number of realizations is exactly the number of search logs. That gives
\[
(C-2)^M \leq n\cdot T_{M+1}\cdot C^n=o(4^M).
\]
Therefore for $C\geq6$ and sufficiently large $M$ we obtain a contradiction.
\end{proof}

\section{Final remarks}
The expected running time of the algorithm is linear in $n$ for lists  of size at least 4. It is immediate for lists of size 5, and needs a little effort for size 4. The computational experiments suggests different behaviour for size 3. This somehow explains the difficulty of Conjecture \ref{conj}. It might be also the case that the list version of Thue's theorem does not hold, as it goes with the list version of the Four Color Theorem, although every planar graph is colorable from lists of size 5 \cite{Tho94}.

It is natural to try a similar approach for other Thue-type problems,
especially for those in which the Lov\'{a}sz local lemma has been previously
successfully applied. One such topic concerns graph-theoretic analogues of
nonrepetitive sequences. A coloring of the vertices of a graph $G$ is \emph{%
nonrepetitive} if sequences of colors on all simple paths of $G$ are
nonrepetitive. The minimum number of colors needed is denoted by $\pi (G)$.
This parameter is bounded for graphs with bounded degree \cite%
{AGHR02}, as well as for graphs with bounded treewidth \cite{BV07}, \cite{KP08}. A major challenge of this area is to settle whether $\pi
(G)$ is bounded by a constant for all planar $G$.

The ideas behind the erase-repetition algorithm already led to the proof \cite{KM} that for every tree and lists of size $4$ one can choose a coloring with no \emph{three} consecutive identical blocks on any simple path. This fits to the recent construction from \cite{FOOZ11} proving that no constant-size of lists guarantees a nonrepetitive coloring of a tree chosen from these lists.

Another direction is to look for stronger versions of nonrepetitive
sequences. Here is an interesting variation due to Erd\H{o}s \cite%
{Erd61}. A sequence $S$ is \emph{strongly nonrepetitive} if no two
adjacent blocks of $S$ are permutations one of another. It is known that
there are arbitrarily long strongly nonrepetitive sequences over four
symbols \cite{Ker92}. But is it true that one can choose strongly
nonrepetitive sequences from any collection of lists of sufficiently large
size?

\bibliographystyle{siam}
\bibliography{zigzags}
\end{document}